\title{The Neretin groups}
\author{{\L}ukasz Garncarek}
\author{Nir Lazarovich}
\thanks{During the work on this paper the first author was supported
  by a scholarship of the Foundation for Polish Science and by the
  grant 2012/06/A/ST1/00259 of the National Science Center. The work
  was conducted during the first author's internship at the Warsaw
  Center of Mathematics and Computer Science.}
\newtheorem{theorem}{Theorem}[section]
\newtheorem{lemma}[theorem]{Lemma}
\newtheorem{proposition}[theorem]{Proposition}
\theoremstyle{definition}
\newtheorem{definition}{Definition}
\theoremstyle{remark}
\newtheorem{remark}[theorem]{Remark}
\newcommand{\NN}{\mathbb{N}}
\newcommand{\ZZ}{\mathbb{Z}}
\newcommand{\abs}[1]{\left\lvert{#1}\right\rvert}
\newcommand{\bd}{\partial}
\DeclareMathOperator{\Homeo}{Homeo}
\DeclareMathOperator{\Aut}{Aut}
\DeclareMathOperator{\diam}{diam}
\DeclareMathOperator{\sgn}{sgn}
\DeclareMathOperator{\supp}{supp}
\begin{document}
\maketitle

\section{Introduction}
\label{sec:introduction}

The Neretin group $N_q$ was introduced in \cite{Neretin1993} as an
analogue of the diffeomorphism group of the circle. It is a subgroup
of the homeomorphism group of the boundary of an infinite $q$-regular
tree $T$, consisting of elements which locally act by similarities of
the visual metric. We define the Neretin group, endow it with a group
topology, and present the proof of its simplicity, following
\cite{Kapoudjian}.

In Section \ref{sec:preliminaries} we discuss the structure of the
boundary of a regular tree. Sections \ref{sec:neretin-groups} and
\ref{sec:topol-neret-group} define the Neretin group, and describe its
locally compact totally disconnected group topology. In Sections
\ref{sec:higm-thomps-groups} and \ref{sec:conv-gener-set} we define
the Higman-Thompson groups $G_{q,r}$, another family of groups related to
boundaries of regular trees, and show how they can be embedded into the
Neretin group. Then we prove that the Neretin group $N_q$ is generated by
any of the embedded copies of the Higman-Thompson group $G_{q,2}$
together with the canonically embedded group of type-preserving
automorphisms of the tree $T$. Finally, Section~\ref{sec:simplicity}
presents the proof of simplicity of the Neretin groups.

We do not make any claims of originality of the presented
results. This text is an extended summary of our talk given during the
Oberwolfach Arbeitsgemeinschaft on totally disconnected locally
compact groups, held in October 2014. 

\section{Preliminaries}
\label{sec:preliminaries}

A tree $T$ is a nonempty connected undirected simple graph without
nontrivial cycles.  We will interchangeably treat $T$ as a set of
vertices endowed with a binary relation of adjacency, or as a
topological space obtained from the set of vertices by gluing in unit
intervals corresponding to edges. A fixed basepoint $o\in T$ defines a
partial order on $T$, namely $v\leq_o w$ if and only if the path from
$v$ to $o$ passes through $w$. The basepoint $o$ is the greatest
element in this order. The tree structure on $T$ can be recovered from
the poset $(T,\leq_o)$ as follows. Two elements $v,w\in T$ are adjacent
if and only if they are comparable, and there are no other elements
between them. It follows that tree automorphisms of $T$ fixing $o$ are
exactly the order automorphisms of $(T,\leq_o)$.

The distance between vertices $v,w\in T$, i.e.\ the number of edges on
the unique path joining them, will be denoted by $\abs{vw}$. A vertex
$v\in T$ of degree $1$ is called a \emph{leaf}.  A tree is
\emph{$q$-regular} for some $q\in\NN$ if all its vertices have degree
$q+1$. A \emph{finite $q$-regular tree} is a finite tree, whose every
vertex is either a leaf, or has degree $q+1$, in which case we call it
\emph{internal}. A \emph{rooted tree} is a tree with a fixed choice of
base vertex $o\in T$, called its \emph{root}. In case of rooted trees
we slightly modify the definition of $q$-regularity by requiring the
root to be of degree $q$ instead of $q+1$. 
In the subsequent sections we will deal only with regular trees, so
let us assume from now on that $T$ is a rooted or unrooted $q$-regular
tree with $q\geq 2$. This will relieve us from considering some
special cases, which would otherwise appear in the following
discussion.

By a \emph{ray} in $T$ we understand an infinite
path, i.e.\ a sequence $(v_0,v_1,\ldots)$ of distinct vertices of $T$
such that the consecutive ones are connected by edges. Two rays are
said to be \emph{asymptotic} if, after removing some finite initial
subsequences, they become equal. Equivalence classes of rays in $T$
are called the \emph{ends} of $T$. The set of all ends of $T$ is
denoted by $\bd T$ and referred to as the \emph{boundary} of $T$. Any
end $\xi\in\bd T$ has a unique representative $\xi_v$ with a given
initial vertex $v\in T$. To see it, one has to pick a representative
$(v_0,v_1,\ldots)$ of $\xi$, find a minimal path from $v$ to one of
the vertices $v_i$, and replace the initial segment $(v_0,\ldots,v_i)$
by this path. Thus, if we choose a base vertex $o\in T$, we may
identify $\bd T$ with the set of rays emanating from $o$.

For $o\in T$ denote by $(\xi,\eta)_o$ the length of the common initial
segment of the representatives $\xi_o$ and $\eta_o$ of two ends
$\xi,\eta\in\bd T$. It takes values in $\NN\cup\{\infty\}$. Together
with a choice of $\epsilon > 0$ this allows to define a \emph{visual metric}
$d_{o,\epsilon}$ on $\bd T$ by
\begin{equation}
  \label{eq:def-metric}
  d_{o,\epsilon}(\xi,\eta) = e^{-\epsilon(\xi,\eta)_o}.
\end{equation}
The change of the basepoint $o$ leads to a bi-Lipschitz equivalent
metric, and changing $\epsilon$ still gives the same topology. It is
an exercise to check that this unique natural topology on $\bd T$ is
compact and second countable, provided that $T$ is \emph{locally
  finite}, i.e.\ every vertex has finite degree. 

If $j\colon T_1 \to T_2$ is an embedding of trees, it sends rays to
rays, and preserves asymptoticity. Hence, it induces a map $j_*\colon
\bd T_1 \to \bd T_2$. If we choose basepoints $o_i\in T_i$ in such a
way that $j(o_1)=o_2$, then $j_*$ can be seen to be an isometric
embedding, so in particular it is continuous. Taking the boundary in
fact gives a functor from the category of trees and tree embeddings
into the category of topological spaces and continuous embeddings.

From now on we will fix $\epsilon=1$ and $o\in T$, and suppress them
from notation whenever possible. There may exist more natural choices
for $\epsilon$, e.g.\ in the case of regular trees, but they will not
be of any use to us. The geometrically obvious inequality $(\xi,\eta)
\geq \min\{ (\xi,\zeta),(\zeta,\eta)\}$ implies that $d$ is in fact an
ultrametric, i.e.\ it satisfies a stronger variant of the triangle
condition,
\begin{equation}
  \label{eq:visual-ultrametric}
  d(\xi,\eta) \leq \max \{ d(\xi,\zeta), d(\zeta,\eta) \}.
\end{equation}
As a consequence, two open balls in $(\bd T,d)$ are either disjoint, or one
of them is contained in the other. It follows that the covering of
$\bd T$ by open balls of fixed radius is in fact a partition into
open---and hence also closed---sets, and $\bd T$ is totally
disconnected. Additionally, since the metric $d$ takes values in a
discrete set, any closed ball is also an open ball with a slightly
larger radius, and vice versa.

Ultrametricity implies that any point of a ball in $(\bd T,d)$ is its
center. It is however still possible to effectively enumerate the
balls in a one-to-one manner. To this end, for $v\in T$ define $T_v$
as the subtree of $T$ spanned by the vertices $\{w\in T : w \leq_o
v\}$. It is a rooted $q$-regular tree with root $v$, and its boundary
$\bd T_v$ is a subset of $\bd T$. It is in fact a closed ball of
radius equals $e^{-\abs{ov}}$, and the embedding $(\bd T_v, d_v) \to
(\bd T, d_o)$ is a similarity. On the other hand, any ball
$B\subseteq\bd T$ is a closed ball of radius $e^{-n}$ for some
$n\in\NN$, and can be written as $\bd T_v$ where $v$ is the last
vertex of the common initial segment of all the rays $\xi_o$
representing points $\xi\in B$. The family of non-empty balls in $\bd
T$ is therefore in a one-to-one correspondence with vertices of $T$.

As a consequence of the above discussion, the assignment $v\mapsto \bd
T_v$ is an order-isomorphism between $(T,\leq_o)$ and the set
$\mathcal{B}(\bd T, d_0)$ of all balls in $(\bd T, d_o)$ ordered by
inclusion. Moreover, if $\phi\colon T\to T'$ is a
basepoint-preserving isomorphism of trees, then $\phi(T_v)=T_{\phi(v)}$, and
\begin{equation}
  \phi_*(\bd T_v) = \bd \phi(T_v) = \bd T_{\phi(v)}.
\end{equation}
This means that the order-isomorphism between $\mathcal{B}(\bd T,d_o)$
and $\mathcal{B}(\bd T',d_{\phi(o)})$ induced by $\phi$ is the same as
the one induced by $\phi_* \colon \bd T\to\bd T'$. This correspondence
can be reversed, namely if $\Phi\colon \bd T\to \bd T'$ is a
homeomorphism preserving balls, it necessarily preserves their
inclusion, and induces an order-isomorphism of $\mathcal{B}(\bd
T,d_o)$ and $\mathcal{B}(\bd T',d_{\phi(o)})$ yielding a
basepoint-preserving isomorphism $\phi\colon T\to T'$. It satisfies
\begin{equation}
  \phi_*(\bd T_v) = \bd T_{\phi(v)} = \Phi(\bd T_v), 
\end{equation}
but since the balls form a basis of the topology of $\bd T$, this
means that $\Phi=\phi_*$.


Finally, let us introduce the notion of a \emph{forest}. It is what we
obtain if we remove the assumption of connectedness from the
definition of a tree. In other words, a forest is a graph $F$ which
decomposes into a disjoint union of trees. We may define its boundary
$\bd F$ as the disjoint union of the boundaries of its constituent
trees. It is again functorial. Most of the discussion above extends to
forests.

\section{The Neretin groups of spheromorphisms}
\label{sec:neretin-groups}

Let $T$ be a $q$-regular tree. For a nonempty finite $q$-regular
subtree $F\subseteq T$, by the difference $T\setminus F$ we will
understand the rooted $q$-regular forest obtained by removing from $T$
all the edges and internal vertices of $F$, and designating the leaves
of $F$ as the roots; geometrically, this amounts to removing the
interior of $F$ from $T$. Clearly, $\bd(T\setminus F)=\bd T$, as
every ray in $T$ has a subray disjoint from $F$.

Now, let $F_1,F_2\subseteq T$ be two finite $q$-regular subtrees, such
that there exists an isomorphism of forests $\phi\colon T\setminus
F_1\to T\setminus F_2$. It induces a homeomorphism $\phi_*$ of $\bd
T$, called a \emph{spheromorphism} of $\bd T$. The isomorphism $\phi$
will be referred to as a representative of $\phi_*$.

Observe that the identity map of $\bd T$ is a spheromorphism. More
generally, if $\phi\in\Aut(T)$, then for any subtree $F\subseteq T$
the map $\phi$ restricts to an isomorphism of forests $T\setminus F
\to T\setminus \phi(F)$, and thus the induced homeomorphism $\phi_*$
is a spheromorphism. Moreover, the inverse of a spheromorphism is also
a spheromorphism, and for any pair of spheromorphisms $\phi_*$ and
$\psi_*$ we may find representatives which are composable, showing
that $\psi_*\circ \phi_*$ is also a spheromorphism.

\begin{definition} 
  \label{def:neretin-group}
  The \emph{Neretin group} $N_q$ is the group of all spheromorphisms
  of the boundary of a $q$-regular tree.
\end{definition}

The group $N_q$ has another description, based upon the metric
structure of the boundary. We will call a homeomorphism of metric
spaces $\Phi\colon X\to Y$ a \emph{local similarity} if for each $x\in
X$ there exists an open neighborhood $U$ of $x$ and a constant $\lambda_U>0$
such that for every $x_1,x_2\in U$ we have
\begin{equation}
  d_Y(\Phi(x_1),\Phi(x_2)) = \lambda_U d_X(x_1,x_2),
\end{equation}
i.e.\ the restriction $\Phi|_U\colon U\to \Phi(U)$ is a
similarity \cite{Hughes2009}. The requirement that $\phi$ is a homeomorphism allows to
choose $U$ to be a ball $B(x,r)$ centered at $x$, such that
$\Phi(B(x,r))=B(\Phi(x),\lambda_Ur)$. It is clear that all local
similarities of a metric space form a group.

\begin{proposition}
  \label{prop:neretin-local-sim}
  For a homeomorphism $\Phi\in\Homeo(\bd T)$ the following conditions
  are equivalent.
  \begin{enumerate}
  \item $\Phi$ is a spheromorphism,
  \item $\Phi$ is a local similarity with respect to any visual metric
    on $\bd T$,
  \item $\Phi$ is a local similarity with respect to some visual
    metric on $\bd T$.
  \end{enumerate}
\end{proposition}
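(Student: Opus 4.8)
The plan is to prove the implications in the cycle (1) $\Rightarrow$ (2) $\Rightarrow$ (3) $\Rightarrow$ (1). Since (2) $\Rightarrow$ (3) is immediate (``any'' implies ``some''), the real content lies in (1) $\Rightarrow$ (2) and in (3) $\Rightarrow$ (1); throughout I will exploit the ultrametric dictionary from Section~\ref{sec:preliminaries}, under which the closed balls of $(\bd T, d_{o,\epsilon})$ are precisely the sets $\bd T_v$, $v\in T$, and a basepoint-preserving tree isomorphism corresponds to a ball-preserving boundary homeomorphism.

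For (1) $\Rightarrow$ (2), fix a spheromorphism $\Phi=\phi_*$ with representative $\phi\colon T\setminus F_1 \to T\setminus F_2$, and fix an arbitrary visual metric $d_{o',\epsilon'}$. Given $\xi\in\bd T$, its ray eventually leaves the finite tree $F_1$, so $\xi$ lies in $\bd T_v$ for some root $v$ of $T\setminus F_1$, and on this set $\Phi$ is induced by the basepoint-preserving isomorphism $\phi|_{T_v}\colon T_v\to T_{\phi(v)}$. Passing to a deep enough descendant $v'$ of $v$, so that $v'$ separates both $o$ and $o'$ from $\bd T_{v'}$, I obtain the key rescaling identity: on $\bd T_{v'}$ the metric $d_{o',\epsilon'}$ is a constant multiple of the root-based metric $d_{v',\epsilon'}$, since $(\xi_1,\xi_2)_{o'}=\abs{o'v'}+(\xi_1,\xi_2)_{v'}$ there, and likewise on the image ball $\bd T_{\phi(v')}$. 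As a basepoint-preserving isomorphism preserves Gromov products, $\phi_*$ is an isometry from $(\bd T_{v'},d_{v',\epsilon'})$ onto $(\bd T_{\phi(v')},d_{\phi(v'),\epsilon'})$; composing with the two rescalings shows that $\Phi$ restricted to the neighborhood $\bd T_{v'}$ of $\xi$ is a similarity for $d_{o',\epsilon'}$.

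For (3) $\Rightarrow$ (1), assume $\Phi$ is a local similarity for some $d_{o,\epsilon}$. Around each $\xi$ I may, as noted after the definition of local similarity, choose the similarity neighborhood to be a ball $\bd T_v$ whose image $\Phi(\bd T_v)$ is again a ball, hence equals $\bd T_w$ for a unique $w$. By compactness of $\bd T$ finitely many such balls cover it, and by ultrametricity (balls are nested or disjoint) I can refine them to a finite partition $\bd T=\bigsqcup_i \bd T_{v_i}$ on whose pieces $\Phi$ restricts to similarities onto balls $\bd T_{w_i}$; since $\Phi$ is a bijection, the $\bd T_{w_i}$ partition $\bd T$ as well. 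A finite complete partition of $\bd T$ into balls is exactly the set of leaf-boundaries of a finite $q$-regular subtree, so $\{v_i\}$ and $\{w_i\}$ are the leaf sets of finite $q$-regular subtrees $F_1$ and $F_2$, giving $T\setminus F_1=\bigsqcup_i T_{v_i}$ and $T\setminus F_2=\bigsqcup_i T_{w_i}$. Each restriction $\Phi\colon \bd T_{v_i}\to\bd T_{w_i}$ is a similarity, hence maps balls to balls, so by the correspondence recalled above it is induced by a unique basepoint-preserving isomorphism $T_{v_i}\to T_{w_i}$. Assembling these yields an isomorphism of forests $\phi\colon T\setminus F_1\to T\setminus F_2$ with $\phi_*=\Phi$, exhibiting $\Phi$ as a spheromorphism.

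The main obstacle is the combinatorial bookkeeping in (3) $\Rightarrow$ (1): converting the local, point-by-point similarity data into a single finite partition of $\bd T$ adapted to $\Phi$ (which is where compactness and the nesting property of ultrametric balls are essential), and then verifying that the vertices indexing such a partition are genuinely the leaves of a finite $q$-regular subtree, so that $T\setminus F_1$ and $T\setminus F_2$ are legitimate objects from the definition of spheromorphism.
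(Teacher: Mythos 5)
Your proposal is correct and takes essentially the same route as the paper: the Gromov-product rescaling computation for $(1)\Rightarrow(2)$ (your passage to deep descendants $v'$ plays the same role as the paper's assumption that $F_1,F_2$ contain the basepoint as an internal vertex), and for $(3)\Rightarrow(1)$ the compactness-plus-ultrametricity refinement to a finite partition into balls, assembled via the correspondence between ball-preserving homeomorphisms and root-preserving tree isomorphisms. The only blemish, trivially repaired, is the degenerate partition $\{\bd T\}$ consisting of a single ball, which is not the leaf-boundary set of a finite $q$-regular subtree; the paper excludes it by requiring the partition to contain at least two balls, which one can always arrange by subdividing.
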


\begin{proof}
  We begin by showing that (1) implies (2). Fix a basepoint $o\in T$
  and the corresponding visual metric $d$. Let $\Phi=\phi_*$ be a
  spheromorphism represented by $\phi\colon T\setminus F_1\to
  T\setminus F_2$. We may assume that both $F_1$ and $F_2$ contain $o$
  as internal vertex. Let $T\setminus F_1 = T_1 \cup \cdots \cup T_k$
  be the decomposition into disjoint trees. Then $T\setminus F_2$
  decomposes into $\phi(T_1) \cup \cdots \cup \phi(T_k)$. These
  decompositions induce partitions of $\bd T$ into open balls $\bd
  T_i$ and $\bd (\phi(T_i)) = \phi_*(\bd T_i)$. 

  Let $v$ be the root of $T_i$. Then the root of
  $\phi(T_i)$ is $\phi(v)$. For $\xi,\eta\in\bd T_i\subseteq\bd T$ we
  have
  \begin{equation}
    \begin{split}
      (\phi_*(\xi),\phi_*(\eta))_o & =
      (\phi_*(\xi),\phi_*(\eta))_{\phi(v)} + \abs{o\phi(v)} = \\
      & = (\xi,\eta)_v + \abs{o\phi(v)} = (\xi,\eta)_o - \abs{ov} +
      \abs{o\phi(v)},
    \end{split}
  \end{equation}
  which implies that 
  \begin{equation}
    d(\phi_*(\xi),\phi_*(\eta)) = e^{\abs{ov}-\abs{o\phi(v)}} d(\xi,\eta),
  \end{equation}
  and $\phi_*|_{\bd T_i} \colon \bd T_i \to \phi_*(\bd T_i)$ is a
  similarity.

  The other nontrivial implication is from (3) to (1). Let
  $\Phi\in\Homeo(\bd T)$ be a local similarity of $(\bd T, d_o)$.  By
  compactness, we may cover $\bd T$ by finitely many balls $B$ on
  which $\Phi$ is a similarity, and $\Phi(B)$ is also a ball. By
  ultrametricity, we may assume that this covering is disjoint, and
  contains at least $2$ balls. The balls in the covering are of the
  form $\bd T_v$ with $v$ in some finite set $L\subseteq T$, and
  $\Phi(\bd T_v)=\bd T_{v'}$ for some $v'\in T$. The restriction
  $\Phi|_{\bd T_v}\colon \bd T_v \to \bd T_{v'}$ preserves balls, and
  therefore is induced by a root-preserving isomorphism $\phi_v \colon
  T_v\to T_{v'}$. It now remains to observe that the forests $\bigcup
  T_v$ and $\bigcup T_{v'}$ are obtained by removing finite regular
  subtrees from $T$, so that the isomorphisms $\phi_v$ assemble into
  an isomorphism of these forests, representing a spheromorphism.
\end{proof}

\section{Topology on the Neretin groups}
\label{sec:topol-neret-group}

The Neretin group $N_q$ is a subgroup of the homeomorphism group of
$\bd T$, which is a topological group when endowed with the
compact-open topology. Since $\bd T$ is compact, this topology is
metrizable: for $\Phi,\Psi\in\Homeo(\bd T)$ we have
\begin{equation}
  d(\Phi, \Psi) = \sup_{\xi\in\bd T} d_o(\Phi(\xi),\Psi(\xi)).
\end{equation}
A first choice for the group topology on $N_q$ would be to restrict
the compact-open topology. Unfortunately, this restriction is not locally
compact, as we will now observe, using the following lemma.

\begin{lemma}
  \label{lem:lc-sbgp-closed}
  If a subgroup $H$ of a topological group $G$ is locally compact,
  then it is closed.
\end{lemma}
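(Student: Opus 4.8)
The plan is to use the standard two-step strategy: first show that local compactness forces $H$ to be \emph{locally closed} in $G$, i.e.\ open in its closure $\overline{H}$, and then exploit the subgroup structure to upgrade local closedness to closedness. Throughout I assume $G$ is Hausdorff, which holds in the intended application since $\Homeo(\bd T)$ is metrizable.

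For the first step I would argue as follows. Pick any point $h\in H$. Local compactness provides a compact neighborhood $K$ of $h$ in $H$, so there is an open set $V\subseteq G$ with $h\in V\cap H\subseteq K$. Since $G$ is Hausdorff and $K$ is compact, $K$ is closed in $G$. The key claim is then that $V\cap\overline{H}\subseteq K$: given $x\in V\cap\overline{H}$, any neighborhood $W$ of $x$ makes $W\cap V$ a neighborhood of $x$, which meets $H$ because $x\in\overline{H}$; the resulting point lies in $V\cap H\subseteq K$, so $W$ meets $K$. Hence $x\in\overline{K}=K$. This shows $V\cap\overline{H}$ is a neighborhood of $h$ in $\overline{H}$ contained in $K\subseteq H$, so $H$ is open in $\overline{H}$.

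For the second step I would use that $\overline{H}$ is itself a subgroup of $G$, since the closure of a subgroup in a topological group is again a subgroup. Thus $H$ is an \emph{open subgroup} of the topological group $\overline{H}$. Any open subgroup is automatically closed, because its complement is a union of cosets, each a translate of the open set $H$ and hence open. Therefore $H$ is closed in $\overline{H}$; being also dense, it must equal $\overline{H}$, so $H$ is closed in $G$.

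The main obstacle is the first step, the passage from local compactness to local closedness, since it is the only place where the topology of $G$ is genuinely used, namely Hausdorffness together with the interplay between compactness and closure. The second step is purely algebraic--topological and routine once one recalls that open subgroups are closed.
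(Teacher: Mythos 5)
Your proof is correct and follows essentially the same route as the paper's: both work inside the closure $\overline{H}$ (the paper phrases this as reducing to the case where $H$ is dense), use a compact --- hence closed, by Hausdorffness --- neighborhood together with density to show that $H$ is open in $\overline{H}$, and finish by noting that open subgroups are closed. The only differences are cosmetic: the paper argues at the identity rather than at an arbitrary point of $H$, and you make explicit the Hausdorff hypothesis that the paper uses tacitly when treating the compact set as closed.
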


\begin{proof}
  First, assume that $H$ is dense in $G$. Let $U$ be an open
  neighborhood of $1$ in $G$ such that the closure $K$ of $U\cap H$ in
  $H$ is compact. We then have $K\cap U = H\cap U$. This set is both
  closed an dense in $U$, hence it is equal to $U$. Therefore
  $U\subseteq H$, so the subgroup $H$ is open, and hence closed.

  In the general case $H$ is dense, and hence closed in its
  closure in $G$. This means that it is closed in $G$.
\end{proof}

Now, we can see that if $N_q$ with the compact-open topology was
locally compact, it would be a closed subgroup. We will show this is
false by constructing a sequence of spheromorphisms converging to a
homeomorphism outside $N_q$, using the description of spheromorphisms
as local similarities. Let $B_i$ be a sequence of pairwise disjoint
balls in $\bd T$. For each $i$ we may construct a spheromorphism
$\Phi_i\in N_q$, which is identity outside $B_i$, and on some ball
inside $B_i$ it restricts to a similarity with scale greater that
$i$. The sequence $\Psi_k=\Phi_k \circ\cdots\circ \Phi_1$ is Cauchy,
as for $k>l$
\begin{equation}
  d(\Psi_k,\Psi_l) = d(\Phi_k\circ\cdots\circ \Phi_{l+1}, \mathrm{id})
  \leq \max_{l < i \leq k} \diam B_i \xrightarrow[l\to \infty]{} 0.
\end{equation}
Therefore, $\Psi_k$ converge to a homeomorphism $\Psi\in\Homeo(\bd
T)$. It has the same restriction to $B_i$ as $\Phi_i$, and therefore
on some ball it restricts to a similarity of scale at least $i$. But
local similarities are Lipschitz, so $\Psi\not\in N_q$.

The issue of endowing $N_q$ with a locally compact group topology can
be resolved by observing that it already contains a locally compact
group as a subgroup. Indeed, $\Aut(T)$ naturally embeds in $N_q$ (and
will be identified with its image) and carries the compact-open
topology coming from its action on the set of vertices of $T$. It is
locally compact and totally disconnected. We can extend it to $N_q$ by
declaring the left cosets of $\Aut(T)$ to be open and homeomorphic to
$\Aut(T)$ by the translation maps. If $g\!\Aut(T)=h\!\Aut(T)$, then
the translation maps induce the same topology, so it is well defined,
and clearly locally compact and totally disconnected. What is not so
clear is whether this makes $N_q$ a topological group---if it does,
then this topology is clearly the unique one making $\Aut(T)$ with its
original topology an open subgroup of $N_q$. This issue is addressed
by the following two lemmas.

\begin{lemma}
  \label{lem:general-existence-of-topology}
  Suppose that an abstract group $G$ contains a topological group $H$
  as a subgroup. Then $G$ admits a unique group topology, in which $H$
  becomes an open subgroup, provided that for all open subsets
  $U\subseteq H$ and $g,g'\in G$ the intersection $gUg'\cap H$ is open
  in $H$.
\end{lemma}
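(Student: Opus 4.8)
The plan is to produce the topology from a neighbourhood base at the identity and to invoke the standard criterion for such a base to generate a group topology. Let $\mathcal{N}$ denote the collection of all open neighbourhoods of $1$ in $H$. The recipe of Bourbaki says that $\mathcal{N}$ is a base of neighbourhoods of $1$ for a (necessarily unique) group topology on $G$ as soon as $\mathcal{N}$ is a filter base and satisfies the three conditions: for every $U\in\mathcal{N}$ there is $V\in\mathcal{N}$ with $VV\subseteq U$ and with $V^{-1}\subseteq U$, and for every $U\in\mathcal{N}$ and every $g\in G$ there is $V\in\mathcal{N}$ with $gVg^{-1}\subseteq U$. The topology is then defined by declaring $W\subseteq G$ open precisely when for each $g\in W$ some $U\in\mathcal{N}$ satisfies $gU\subseteq W$.

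First I would record the easy points. That $\mathcal{N}$ is a filter base is immediate, since a finite intersection of open neighbourhoods of $1$ in $H$ is again one. The two conditions $VV\subseteq U$ and $V^{-1}\subseteq U$ are exactly continuity of multiplication and of inversion of the topological group $H$ at the identity, and the witnesses $V$ may be taken inside $H$, hence in $\mathcal{N}$. None of this uses the hypothesis on $G$.

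The crux is the conjugation condition, and this is precisely where the assumption enters. Given $U\in\mathcal{N}$ and $g\in G$, I would set $V=g^{-1}Ug\cap H$. By the hypothesis applied to the open set $U$ and the elements $g^{-1},g\in G$, this set is open in $H$; and it contains $1$ because $1\in U$ forces $1=g^{-1}\cdot 1\cdot g\in g^{-1}Ug$. Thus $V\in\mathcal{N}$, and by construction $gVg^{-1}\subseteq g(g^{-1}Ug)g^{-1}=U$, as required. With all three conditions verified, the criterion yields a group topology on $G$.

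It then remains to check the stated properties and uniqueness. Since $H$ is a subgroup containing the open neighbourhood $U\subseteq H$ of $1$ for any $U\in\mathcal{N}$, it is open; and because $\mathcal{N}$ is simultaneously the neighbourhood base of $1$ for the original topology of $H$ and for the subspace topology induced from $G$, the two topologies on $H$ coincide. For uniqueness, note that in any group topology on $G$ making $H$ an open subgroup with its given topology, the elements of $\mathcal{N}$ are open in $G$ and form a base of neighbourhoods of $1$ (given a neighbourhood $N$ of $1$ in $G$, the set $N\cap H$ is a neighbourhood of $1$ in $H$ and so contains some $U\in\mathcal{N}$); since a group topology is determined by the neighbourhood filter of its identity, such a topology is unique. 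I expect the conjugation condition to be the only real obstacle, the remaining verifications being formal.
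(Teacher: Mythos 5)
Your proof is correct, but it takes a genuinely different route from the paper's. The paper constructs the topology explicitly, declaring as a basis the left translates of open subsets of $H$, and then verifies the group-topology axioms by hand: the hypothesis (in its full two-sided form) is used to write a right translate $Ug'$ as $\bigcup_{g\in G} g(H\cap g^{-1}Ug')$, hence open, so that left and right translations are homeomorphisms; continuity of multiplication and inversion is then checked with nets after reducing to elements of $H$. You instead feed the filter base $\mathcal{N}$ of open identity neighbourhoods of $H$ into the standard Bourbaki criterion, so the verification of the axioms is outsourced to that citation, and the hypothesis enters exactly once, in the conjugation condition, and only in the special case $g'=g^{-1}$. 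That last point is a genuine bonus of your argument: it shows the lemma already holds under the formally weaker assumption that $g^{-1}Ug\cap H$ is open in $H$ for all $g\in G$ and all open $U\subseteq H$, the two-sided statement then being automatic a posteriori (in the resulting topology $gUg'$ is open in $G$, so its intersection with the open subspace $H$ is open). What the paper's hands-on approach buys is self-containedness and an explicit description of the topology---open sets are exactly unions of left translates $gV$ with $V\subseteq H$ open---which is the form in which the topology on $N_q$ is actually introduced and used in Section \ref{sec:topol-neret-group}, where left cosets of $\Aut(T)$ are declared open. Your uniqueness argument (a group topology is determined by the neighbourhood filter of the identity, and any topology making $H$ an open subgroup must have $\mathcal{N}$ as a neighbourhood base at $1$) is also slightly more detailed than the paper's one-line remark that the basis is forced.
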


\begin{proof}
  The basis for the topology on $G$ is necessarily the family of left
  translates of open subsets of $H$. This indeed makes $H$ embedded
  homeomorphically as an open subset. Moreover, right translates of
  open subsets of $H$ are also open, since for $U\subseteq H$ open and
  $g'\in G$ the set $Ug'$ can be written as
  \begin{equation}
    Ug' = \bigcup_{g\in G} (gH \cap Ug') = \bigcup_{g\in G} g(H\cap g^{-1}Ug'),
  \end{equation}
  which is a union of left translates of open subsets of $H$. As a
  consequence, left and right translations are homeomorphisms of $G$.

  Now we need to ensure that multiplication and inversion are
  continuous. Let $g_\alpha\to g$ and $g'_\alpha\to g'$ be two
  convergent nets in $G$. The cosets $gH$ and $Hg'$ are open
  neighborhoods of $g$ and $g'$ respectively, so without loss of
  generality we may assume that $g_\alpha =gh_\alpha$ and $g'_\alpha =
  h'_\alpha g'$ with $h_\alpha,h'_\alpha \in H$ converging to
  $1$. Then $h_\alpha h'_\alpha\to 1$ in $H$, and therefore $g_\alpha
  g'_\alpha = gh_\alpha h'_\alpha g' \to gg'$. Similarly
  $g_\alpha^{-1} = h_\alpha^{-1} g^{-1} \to g^{-1}$.
\end{proof}

In order to show that the topology we put on $N_q$ is indeed a group
topology, it remains to show that for every open $U\subseteq \Aut(T)$
and $g,g'\in N_q$ the subset $\Aut(T) \cap gUg'$ is open in
$\Aut(T)$. Observe that this property is preserved under unions and
finite intersections, so it is enough to show it for $U$ in a certain
subbasis of the topology on $\Aut(T)$. 

Let $o\in T$ be a base vertex, and denote by $K$ the stabilizer of $o$
in $\Aut(T)$. Then for $g,h \in \Aut(T)$ the set $gKh$ consists
exactly of the automorphisms sending $h^{-1}(o)$ to $g(o)$, so the
finite intersections of the sets of the form $gKh$ yield the standard
basis for the topology of $\Aut(T)$. We are thus left with proving the
following.

\begin{lemma}
  \label{lem:neretin-topological-condition}
  If $K$ is the stabilizer of the base vertex $o\in T$ in the group
  $\Aut(T)$, then for all $\phi_*,\psi_* \in N_q$ the intersection $\psi_*K\phi_* \cap
  \Aut(T)$ is open in $Aut(T)$.
\end{lemma}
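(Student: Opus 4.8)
The plan is to understand the set $\psi_* K \phi_* \cap \Aut(T)$ concretely and show that each of its points is interior. An element of this set is an automorphism $g \in \Aut(T)$ that can be written as $g = \psi_* \kappa \phi_*$ for some $\kappa \in K$; equivalently, $\psi_*^{-1} g \phi_*^{-1} \in K$, i.e.\ $\psi_*^{-1} g \phi_*^{-1}$ is a spheromorphism fixing the base vertex $o$. Since $K$ fixes $o$, it fixes the whole top-level structure near $o$, so the condition on $g$ is a constraint only on how $g$ interacts with the finite subtrees on which $\phi_*$ and $\psi_*$ act nontrivially. The key idea is that $\phi_*$ and $\psi_*$ are each represented by isomorphisms $\phi\colon T\setminus F_1 \to T\setminus F_2$ and $\psi\colon T\setminus G_1 \to T\setminus G_2$ of forests obtained by deleting \emph{finite} subtrees, so away from these finite subtrees the maps $\phi_*$ and $\psi_*$ behave like tree isomorphisms between cofinite subforests.

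First I would fix representatives $\phi, \psi$ and enlarge the finite subtrees $F_1, F_2, G_1, G_2$ so that they all contain $o$ as an internal vertex and are large enough to be compatible. Then I would describe $\psi_* K \phi_*$ in terms of the induced order-isomorphisms on balls: using the dictionary from the Preliminaries between ball-preserving homeomorphisms and basepoint-preserving tree isomorphisms, the condition $\psi_*^{-1} g \phi_*^{-1} \in K$ translates into a finite combinatorial matching condition on the outermost level of balls, together with the requirement that $g$ restricts to an honest tree isomorphism between corresponding subforests. The essential observation is that this matching condition involves only finitely many balls (those coming from the leaves of the enlarged finite subtrees), and hence only finitely much data about $g$.

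Next I would show that this finitely-much-data condition defines an open set. The topology on $\Aut(T)$ is generated by the sets $gKh$, which are exactly the automorphisms sending one prescribed vertex to another; finite intersections of these describe automorphisms with a prescribed finite pattern of behavior on a finite set of vertices. So it suffices to exhibit, for each $g \in \psi_* K \phi_* \cap \Aut(T)$, a basic open neighborhood consisting of automorphisms that agree with $g$ on the relevant finite subtree. Concretely, the automorphisms that agree with $g$ up to some fixed radius $R$ (with $R$ chosen larger than the diameters of all the enlarged finite subtrees) form a basic open set, and I claim any such automorphism also lies in $\psi_* K \phi_*$: beyond radius $R$ the maps $\phi_*$, $\psi_*$ and any such $g$ all act by forest isomorphisms, so the composite $\psi_*^{-1} g \phi_*^{-1}$ fixes $o$ and is a spheromorphism, hence lies in $K$.

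The main obstacle I anticipate is the careful bookkeeping in the middle step: making precise how the two spheromorphisms, defined only up to a choice of representative and only as forest isomorphisms off finite subtrees, interact with an arbitrary automorphism $g$, and pinning down exactly which finite pattern of $g$ must be prescribed to guarantee membership. In particular one must verify that the composite $\psi_*^{-1} g \phi_*^{-1}$ is genuinely a spheromorphism (and not merely a ball-preserving homeomorphism defined piecewise), which requires checking that the piecewise tree isomorphisms assemble into an isomorphism of cofinite subforests, as in the proof of Proposition~\ref{prop:neretin-local-sim}. Once the correct finite radius $R$ is identified and the assembly is verified, openness follows immediately because agreeing with $g$ on a ball of radius $R$ about $o$ is a basic open condition in the compact-open topology on $\Aut(T)$.
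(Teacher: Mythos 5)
Your strategy --- showing that every $g\in\psi_*K\phi_*\cap\Aut(T)$ has a basic open neighborhood $gK_{B_R}$ (the automorphisms agreeing with $g$ on a ball $B_R$ of vertices around $o$) contained in $\psi_*K\phi_*$ --- is a pointwise reformulation of the paper's argument, which instead writes $\psi_*K\phi_*$ globally as a union of translates of a single open subgroup of $\Aut(T)$. However, your justification of the crucial claim is a non sequitur. You argue that for any automorphism $g'$ agreeing with $g$ on $B_R$, ``beyond radius $R$ the maps $\phi_*$, $\psi_*$ and $g'$ all act by forest isomorphisms, so the composite $\psi_*^{-1}g'\phi_*^{-1}$ fixes $o$ and is a spheromorphism, hence lies in $K$.'' But \emph{every} spheromorphism acts by forest isomorphisms outside some finite subtree, so this reasoning, if valid, would place every spheromorphism in $K$; note also that it never uses the hypothesis $g\in\psi_*K\phi_*$, which is a sure sign something is missing. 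Moreover, ``fixes $o$'' is not meaningful for a spheromorphism until one knows it is induced by a global automorphism of $T$ --- and that is exactly what membership in $K\subseteq\Aut(T)$ demands. You also misname the obstacle in your final paragraph: the composite is automatically a spheromorphism because $N_q$ is closed under composition; what requires proof is that it is a tree automorphism at all.

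The missing step is the paper's central observation. Write $g'=gh$ with $h\in K_{B_R}$; then
\begin{equation}
  \psi_*^{-1}g'\phi_*^{-1} = \bigl(\psi_*^{-1}g\phi_*^{-1}\bigr)\cdot\bigl(\phi_* h\phi_*^{-1}\bigr),
\end{equation}
and the first factor lies in $K$ by the hypothesis on $g$. For the second factor, take $R$ large enough that $B_R$ contains $F_1$, and arrange the representative $\phi\colon T\setminus F_1\to T\setminus F_2$ so that $o$ is an internal vertex of $F_2$. Then $h$ fixes $F_1$ pointwise, hence maps each tree of the forest $T\setminus F_1$ to itself; conjugating by the forest isomorphism $\phi$ yields an automorphism of the forest $T\setminus F_2$ preserving each tree, which therefore extends, by the identity on $F_2$, to an automorphism of $T$ fixing $o$. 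Thus $\phi_* h\phi_*^{-1}\in K$, so $\psi_*^{-1}g'\phi_*^{-1}\in K$ and $g'\in\psi_*K\phi_*$. This extension argument --- in the paper's notation, the statement that $\phi_*^{-1}K_B\phi_*$ is precisely the pointwise stabilizer of $F_1$ in $\Aut(T)$ --- is the actual content of the lemma, and without it your proof does not close.
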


\begin{proof}
  The spheromorphisms $\phi_*$ and $\psi_*$ admit representatives
  $\phi\colon T\setminus F_1\to T\setminus B$ and $\psi\colon
  T\setminus B\to T\setminus F_2$, where $B$ is a sufficiently large
  ball in $T$, centered at $o$. Let $K_B\subseteq K$ denote the
  pointwise stabilizer of this ball; it is an open subgroup of
  $\Aut(T)$. We have
  \begin{equation}
    \psi_*K\phi_* = \bigcup_{k \in K} \psi_*kK_B\phi_* =
    \bigcup_{k \in K} \psi_*k\phi_*(\phi^{-1}_*K_B\phi_*),
  \end{equation}
  where $\phi^{-1}_*K_B\phi_*$ consists of elements $\eta_*$ whose
  representatives $\eta\colon T\setminus F_1\to T\setminus F_1$ leave
  the trees of the forest $T\setminus F_1$ in place, and thus extend
  to automorphisms of $T$. Hence, it is an open subgroup of $\Aut(T)$,
  namely the pointwise stabilizer of $F_1$, and therefore the
  intersection $\psi_*K\phi_* \cap \Aut(T)$ is open.
\end{proof}

This shows that the topology we defined on $N_q$ is indeed a group
topology. We may summarize this as follows.
\begin{theorem}
  \label{thm:topology-for-neretin-gp}
  The Neretin group $N_q$ admits a unique group topology such that the
  natural embedding $\Aut(T) \to N_q$ is continuous and open. With
  this topology, $N_q$ is a totally disconnected locally compact
  group.
\end{theorem}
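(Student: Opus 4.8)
The plan is to assemble the pieces already laid out in the preceding lemmas, so the proof is largely a matter of orchestration rather than new computation. First I would observe that all the substantive work has been done: Lemma~\ref{lem:general-existence-of-topology} gives a general criterion under which an abstract group containing a topological group $H$ admits a unique group topology making $H$ open, namely that $gUg'\cap H$ be open in $H$ for all open $U\subseteq H$ and all $g,g'\in G$. Applying this with $G=N_q$ and $H=\Aut(T)$, the theorem reduces to verifying exactly that openness condition, and uniqueness then comes for free from the lemma.

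Next I would explain why the openness condition need only be checked on a generating family of open sets. Since the property ``$gUg'\cap H$ is open'' is stable under unions and finite intersections (taking unions commutes with the translations, and intersections distribute), it suffices to verify it for $U$ ranging over a subbasis of the topology of $\Aut(T)$. The discussion preceding Lemma~\ref{lem:neretin-topological-condition} identifies such a subbasis: the sets $gKh$, where $K$ is the stabilizer of the basepoint $o$, since finite intersections of these recover the standard basis of $\Aut(T)$. For the double-coset $gKh$ the left factor $g$ and right factor $h$ can be absorbed into the ambient $g,g'\in N_q$, so the whole question collapses to the single statement proved in Lemma~\ref{lem:neretin-topological-condition}, that $\psi_*K\phi_*\cap\Aut(T)$ is open for arbitrary $\phi_*,\psi_*\in N_q$. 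Invoking that lemma discharges the hypothesis of Lemma~\ref{lem:general-existence-of-topology}, and we conclude that $N_q$ carries a unique group topology in which $\Aut(T)$ is an open subgroup with its given topology; in particular the embedding is continuous and open.

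Finally I would establish the topological conclusions. Local compactness and total disconnectedness are inherited from $\Aut(T)$: since $\Aut(T)$ is an open subgroup, it is a neighborhood of the identity, and it is locally compact and totally disconnected by its compact-open topology (the point stabilizers being compact open subgroups), as noted in the text. An open subgroup that is locally compact and totally disconnected forces the whole group to share these properties, because every point has a neighborhood homeomorphic, via a translation, to a neighborhood in $\Aut(T)$; translations are homeomorphisms by Lemma~\ref{lem:general-existence-of-topology}. Thus $N_q$ is totally disconnected and locally compact, completing the proof. I do not anticipate a genuine obstacle here, since every nontrivial input is already in hand; the only point demanding care is the bookkeeping that reduces the general openness condition to the specific double-coset handled in Lemma~\ref{lem:neretin-topological-condition}, and making explicit that the left/right translates $g,h$ in $gKh$ may be folded into the arbitrary $g,g'$ of the criterion.
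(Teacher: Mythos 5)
Your proposal is correct and follows essentially the same route as the paper: the theorem there is stated as a summary of the immediately preceding discussion, which likewise applies Lemma~\ref{lem:general-existence-of-topology} with $G=N_q$, $H=\Aut(T)$, reduces the openness condition to the subbasis of double cosets $gKh$ with $K$ the stabilizer of $o$, and discharges it via Lemma~\ref{lem:neretin-topological-condition}. Your closing observation that local compactness and total disconnectedness are inherited from the open subgroup $\Aut(T)$ via translation homeomorphisms is exactly the point the paper makes when it declares the coset topology ``clearly locally compact and totally disconnected.''
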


\section{The Higman-Thompson groups}
\label{sec:higm-thomps-groups}

A tree $T$ is \emph{planar} if it is rooted and for every $v\in T$
there is a fixed linear order on the set of children of $v$. This
corresponds to specifying a way to draw the tree on the plane, so that
for every $v\in T$ its children are below it, ordered from left to
right. The structure of a planar tree is very rigid---an
isomorphism of planar trees, which is required to preserve the
roots and orders on the sets of children, is always unique, if it
exists.

Let $\mathcal{F}$ be a forest consisting of $r$ planar $q$-regular
trees $T_1,\ldots, T_r$. For every $i$ choose rooted (in particular,
this implies that $F_i$ and $F'_i$ have the same root as $T_i$) finite
regular subtrees $F_i,F'_i\subseteq T_i$ in such a way that the
forests $\mathcal{F}_1=\mathcal{F}\setminus \bigcup F_i$ and
$\mathcal{F}_2=\mathcal{F}\setminus\bigcup F'_i$ have the same
number of trees. The forests $\mathcal{F}_1$ and $\mathcal{F}_2$
consist of planar $q$-regular trees, and hence for every bijection of
the sets of trees in $\mathcal{F}_1$ and $\mathcal{F}_2$ there exists
a unique isomorphism $\phi\colon \mathcal{F}_1\to \mathcal{F}_2$ of
planar forests realizing it. It induces a homeomorphism $\phi_*$ of
$\bd \mathcal{F}$, and the subgroup of $\Homeo(\bd \mathcal{F})$
containing all such homeomorphisms is called the Higman-Thompson
group $G_{q,r}$. The group $G_{2,1}$ is known as Thompson group
$V$. 

This definition shows some ties between $G_{q,r}$ and the permutation
groups $S_n$, which we will now make more explicit. The order of children on
each $T_i$ induces a lexicographic order on paths starting from the
root, which correspond to vertices. This defines a linear order on the
set of vertices of each $T_i$. Moreover, the trees themselves can be
ordered from $T_1$ to $T_r$, so we have a linear order on the set of
vertices of $\mathcal{F}$. This allows to order the trees in
$\mathcal{F}_1$ and $\mathcal{F}_2$ by looking at the order of their
roots. An isomorphism $\phi\colon \mathcal{F}_1\to \mathcal{F}_2$ of
planar forests is now completely determined by a permutation
$\sigma\in S_n$, where $n$ is the number of trees in the forests
$\mathcal{F}_i$.

We will use this to define a homomorphism $\theta \colon G_{q,r}\to
\ZZ/2\ZZ$. If $q$ is even, $\theta$ is just the zero homomorphism. On
the other hand, if $q$ is odd, we claim that the sign of the
permutation $\sigma$ associated to $\phi$ in the above discussion
depends only on the element $\phi_*\in G_{n,r}$, and we put
$\theta(\phi_*)=\sgn \sigma$. To see this, observe that if
$\mathcal{F}_1=\bigcup_{i=1}^n L_i$ and $\mathcal{F}_2=\bigcup_{i=1}^n
L'_i$ are decompositions into trees, numbered in accordance with the
order, we may modify the representative $\phi$ in an elementary way as
follows. Choose one of the trees $L_i$ and remove its root, replacing
it with $q$ new trees. Do the same with
$\phi(L_i)=L'_{\sigma(i)}$. This gives a new representative $\phi'$ of
$\phi_*$, obtained by restricting $\phi$. The number of inversions
$I(\sigma')$ in the permutation $\sigma'$ associated to $\phi'$ is
equal to
\begin{equation}
  \begin{split}
    I(\sigma')  &=  \abs{\{ (j,k)\in\{1,\ldots,n\}^2 : j\ne i\ne k,\, j <
    k,\, \sigma(j)>\sigma(k)\}}+\\
    &\quad+ q\abs{\{ k\in\{i+1,\ldots,n\} : \sigma(k)<i \}} +\\
    &\quad+ q\abs{\{ j\in\{1,\ldots,i-1\} : \sigma(j)>i \}} =\\
    &=  I(\sigma) + (q-1)C,
  \end{split}
\end{equation}
where $(q-1)C$ is even. This means that the sign of the permutation
does not change when we apply the described elementary modification to
a representative of $\phi_*$. It remains to observe that any two
representatives of $\phi_*$ can be transformed by a sequence of elementary
modifications into the same third representative.

Using the homomorphism $\theta$ defined above, we may now describe the
commutator subgroup of $G_{q,r}$. The argument below is based on an
idea of Mati Rubin \cite{Brin2004}.
\begin{theorem}
  \label{thm:commutator-higman-thompson}
  The commutator subgroup $G'_{q,r}$ of the Higman-Thompson group $G_{q,r}$ is
  equal to the kernel of the homomorphism $\theta\colon G_{q,r}\to
  \ZZ/2\ZZ$. It is a simple group.
\end{theorem}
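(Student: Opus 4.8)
The plan is to prove the two assertions in turn: first that $G'_{q,r}=\ker\theta$, and then that this common group is simple. I would begin by noting that since $\ZZ/2\ZZ$ is abelian, the inclusion $G'_{q,r}\subseteq\ker\theta$ is automatic, so the content of the first claim is the reverse inclusion. To obtain it, the natural strategy is to exhibit enough commutators. The elements of $G_{q,r}$ are determined by pairs of subdivisions of $\mathcal{F}$ together with a permutation of the resulting trees, and modulo refinement an element of $\ker\theta$ is essentially an even permutation of a fine partition of $\bd\mathcal{F}$ into balls. Since the alternating group is generated by $3$-cycles, I would try to show that every element supported on a partition into balls, acting by an even permutation on those balls, lies in $G'_{q,r}$; the model case is realizing a $3$-cycle on three disjoint balls as a commutator, which can be done by choosing two transposition-like (involutive) elements of $G_{q,r}$ whose supports overlap appropriately, exactly in the spirit of Rubin's argument cited as \cite{Brin2004}. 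The key technical freedom here is that balls can always be subdivided, so one has as many auxiliary disjoint balls to work with as needed.

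Once $G'_{q,r}=\ker\theta$ is established, I would turn to simplicity. The cleanest route is a commutator/fragmentation argument of the type standard for Thompson-like and homeomorphism groups: let $N\trianglelefteq G'_{q,r}$ be a nontrivial normal subgroup and pick $1\ne g\in N$. Since $g$ is not the identity, there is a ball $B\subseteq\bd\mathcal{F}$ with $g(B)$ disjoint from $B$ (here ultrametricity and the ball structure from Section~\ref{sec:preliminaries} let me shrink $B$ until its image is disjoint from it). The idea is then the classical double-commutator trick: for any element $h\in G'_{q,r}$ supported inside $B$, the commutator $[g,h]=g h g^{-1} h^{-1}$ lies in $N$, and because $ghg^{-1}$ is supported in $g(B)$, disjoint from $\supp h\subseteq B$, this commutator equals $(ghg^{-1})\,h^{-1}$, which is conjugate to $h$ (up to the supported-in-$B$ copy). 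Running this for all such $h$ shows $N$ contains every element supported in some ball, and since $\bd\mathcal{F}$ is covered by finitely many balls and $G'_{q,r}$ is generated by its elements supported on small balls, transitivity of the $G_{q,r}$-action on balls lets me conclude $N=G'_{q,r}$.

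I expect the main obstacle to be the bookkeeping in the first part: controlling the sign homomorphism $\theta$ precisely enough to guarantee that the commutators I write down actually generate all of $\ker\theta$, rather than merely a subgroup of finite index or one missing the parity-sensitive elements when $q$ is odd. Concretely, I must check that my generating $3$-cycles (and more generally even rearrangements of balls) genuinely exhaust $\ker\theta$ and that each is a product of commutators, which requires keeping the subdivision process compatible with the linear order on vertices used to define $\sigma$ and hence $\theta$. A subtlety to watch is the $q$ odd versus $q$ even dichotomy, since when $q$ is even $\theta$ is trivial and $\ker\theta=G_{q,r}$, so the two cases need uniform but parity-aware handling.

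For the simplicity step the delicate point is the transition from ``$N$ contains all elements supported in a single ball'' to ``$N=G'_{q,r}$'': I need that $G'_{q,r}$ is generated by such locally supported elements and that conjugation in $G_{q,r}$ moves any ball onto any other, so that the support-in-$B$ hypothesis propagates everywhere. This is where the earlier description of $G_{q,r}$ as acting by similarities permuting the balls $\bd T_v$ does the work, and I would invoke it rather than re-derive it.
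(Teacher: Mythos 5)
Your overall strategy---permutation-theoretic bookkeeping for $G'_{q,r}=\ker\theta$, then a commutator trick for simplicity---is the same one the paper follows, but at the two places where the paper does real work your plan substitutes claims that fail as stated. First, the assertion that ``modulo refinement an element of $\ker\theta$ is essentially an even permutation of a fine partition of $\bd\mathcal{F}$ into balls'' is false for individual elements: a typical element of $G_{q,r}$ carries some ball onto a ball of a different radius, and no common refinement of the domain and range subdivisions is preserved by it, since refining cannot repair the mismatch of scales. What is true is that $G_{q,r}$ is \emph{generated} by elements admitting a representative $\phi\colon\mathcal{F}_1\to\mathcal{F}_1$; this is precisely the first, nontrivial step of the paper's proof (one composes with auxiliary elements merging $q$ sibling trees at a time until the two subdivisions coincide), and your plan contains no argument for it. There is a further parity problem when $q$ is even: then $\ker\theta=G_{q,r}$ contains elements acting as \emph{odd} permutations of a partition (e.g.\ a single transposition of two balls), so even permutations of partitions---your $3$-cycles---cannot directly exhaust $\ker\theta$; one needs the paper's observation that, $q$ being even, a transposition becomes a product of $q$ disjoint transpositions after subdividing each of the two exchanged balls. (When $q$ is odd the sign of the permutation is a representative-independent invariant, which is exactly what makes $\theta$ well defined.)

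Second, the simplicity argument breaks at its final inference. For $h$ supported in $B$ with $g(B)\cap B=\emptyset$ you correctly get $[g,h]=(ghg^{-1})h^{-1}\in N$, but this element is \emph{not} conjugate to $h$: it is a product of two commuting, disjointly supported factors, and its membership in $N$ does not yield $h\in N$. (If that inference were valid, essentially any group acting this way would have no proper nontrivial normal subgroups at all, by a one-line argument.) The standard repair is Epstein's second commutator, formalized in the paper as Lemma \ref{commutator in normal subgroup}: since $gh_1g^{-1}$ is supported in $g(B)$ and hence commutes with anything supported in $B$, one computes $[[g,h_1],h_2]=[h_1^{-1},h_2]\in N$ for all $h_1,h_2$ supported in $B$. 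This only puts \emph{commutators} of ball-supported elements into $N$, so you must still prove that such commutators, together with allowable conjugations, generate $G'_{q,r}$---which is where the paper's bookkeeping with products of two (and then four) disjoint transpositions and the generation of $A_n$, $n\geq 8$, comes in. Finally, note that $N$ is normal only in $G'_{q,r}$, so your appeal to ``transitivity of the $G_{q,r}$-action on balls'' is not available as stated: conjugating an element of $N$ by $g\in G_{q,r}\setminus G'_{q,r}$ need not stay in $N$. The paper handles exactly this subtlety by writing $G_{q,r}=G'_{q,r}\cup\chi_*G'_{q,r}$ for a transposition $\chi_*$ chosen to commute with the element being conjugated.
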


\begin{proof}


  It is clear that $G'_{q,r}\subseteq \ker \theta$, and we need to
  prove the opposite inclusion. First, we claim that $G_{q,r}$ is
  generated by elements with representatives $\phi\colon
  \mathcal{F}_1\to \mathcal{F}_2$ such that
  $\mathcal{F}_1=\mathcal{F}_2$. Indeed, if $\phi_*\in G_{q,r}$ is
  represented by $\phi\colon\mathcal{F}_1\to \mathcal{F}_2$, in both
  $\mathcal{F}_1$ and $\mathcal{F}_2$ we may find families of $q$
  trees whose roots have the same parent in $\mathcal{F}$. If we
  compose $\phi$ with a suitable $\psi\colon \mathcal{F}_2\to
  \mathcal{F}_2$ such that $\psi\circ\phi$ sends the $q$ fixed trees
  from $\mathcal{F}_1$ to the $q$ fixed trees in $\mathcal{F}_2$ in an
  order preserving way, then $(\psi\circ\phi)_*$ can be represented by
  a map $\chi\colon \mathcal{F}_1'\to \mathcal{F}_2'$, where
  $\mathcal{F}_i'$ is obtained from $\mathcal{F}_i$ by adding the
  common parent of the fixed $q$ trees, and joining them into a single
  tree. This process stops after finitely many steps, yielding a
  decomposition of $\phi_*$ into a product of the claimed generators.

  An element of $G_{q,r}$ supported in a proper subset of $\bd
  \mathcal{F}$, represented by $\phi\colon \mathcal{F}_1\to
  \mathcal{F}_1$ which exchanges two trees of $\mathcal{F}_1$ and
  leaves the rest in place, will be called a \emph{transposition}. It
  is now clear that $G_{q,r}$ is generated by transpositions, and if
  $q$ is odd, then $\ker \theta$ is generated by products of pairs of
  transpositions, which can be further assumed to have disjoint
  supports not covering the whole boundary $\bd\mathcal{F}$ (we will
  always assume this when speaking about a product of a pair of transpositions). 
  Moreover, if $q$ is even, any transposition can be decomposed into a
  product of an even number of transpositions with disjoint
  supports. Hence, $\ker\theta$ is always generated by products of
  pairs of disjoint transpositions.

  Now, observe that any two products of pairs of transpositions are
  conjugate in $G_{q,r}$. Thus, in order to complete the proof of the
  inclusion we need to show that the commutator subgroup $G_{q,r}'$
  contains a product of two disjoint transpositions, supported in a
  proper subset of $\bd \mathcal{F}$. To this end, we just need to
  take the commutator of two transpositions, one exchanging two balls
  in $\bd \mathcal{F}$, and the other supported inside one of these
  balls.


  We are now left with observing that the commutator subgroup
  $G_{q,r}'$ is simple. Let $N$ be a normal subgroup of $G_{q,r}'$
  containing a nontrivial element $\phi_*$.  There exists an open set
  $U\subseteq \bd\mathcal{F}$ such that $\phi_*(U)$ is disjoint from
  $U$, and $U\cup\phi_*(U)$ is a proper subset of $\bd\mathcal{F}$. If
  $\psi_*$ is a product of two disjoint transpositions supported in
  $U$ then the commutator $[\phi_*,\psi_*]$ is a product of four
  disjoint transpositions, is supported in $U\cup\phi_*(U)$, and
  belongs to $N$. If $\chi_*$ is a transposition supported outside
  $U\cup \phi_*(U)$, then $[\phi_*,\psi_*]$ is invariant under
  conjugation by $\chi_*$. Since any product of four disjoint
  transpositions, supported in a proper subset of $\bd \mathcal{F}$,
  is conjugate to $[\phi_*,\psi_*]$ by an element of
  $G_{q,r}=G_{q,r}'\cup \chi_* G_{q,r}'$, the normal subgroup $N$
  contains all such products. But from simplicity of the alternating
  groups it follows that for $n\geq 8$ the alternating group $A_n$ is
  generated by products of four disjoint transpositions, hence $N$
  contains all elements possessing a representative whose associated
  permutation is even. This means that $N=G_{q,r}'$.
\end{proof}

\section{A convenient generating set for $N_q$}
\label{sec:conv-gener-set}

Fix a $2$-coloring of vertices of $T$; in this context one usually
refers to colors as \emph{types}. An automorphism $\phi$ of $T$ is
\emph{type-preserving} if and only if whenever $\phi$ fixes an edge of
$T$, then it fixes its endpoints. The subgroup of $\Aut(T)$ consisting
of type-preserving automorphisms is denoted by $\Aut^+(T)$.

It is generated by the union of pointwise edge stabilizers in
$\Aut(T)$. Indeed, denote by $H$ the subgroup of $\Aut(T)$ generated
by the edge stabilizers. It is clearly a subgroup of $\Aut^+(T)$.
Any two edges with a common endpoint can be exchanged using an element
of the stabilizer of another edge with the same endopint. It follows
that all edges with a common endpoint lie in the same orbit of $H$,
and thus $H$ acts transitively on the edges of $T$. Hence, if $\phi\in
\Aut^+(T)$ and $e$ is an edge of $T$, then there exists $\psi\in H$
such that $\psi(e)=\phi(e)$. The element $\psi^{-1}\phi\in\Aut^+(T)$
fixes the edge $e$ pointwise, so $\psi^{-1}\phi\in H$.

As an instance of the Tits Simplicity Theorem \cite{Tits}, we obtain
the following.
\begin{theorem}
  \label{Tits simplicity}
  The group $\Aut^+(T)$ of type-preserving automorphisms is simple.
\end{theorem}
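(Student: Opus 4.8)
The plan is to derive simplicity of $\Aut^+(T)$ from the Tits Simplicity Theorem, so the real work is verifying that the hypotheses of that theorem hold for the action of $\Aut^+(T)$ on the tree $T$. Tits' theorem concerns a group $G$ acting on a tree without inversions, and asserts that the subgroup $G^+$ generated by pointwise fixators of edges is simple, provided two conditions hold: first, that $G$ acts \emph{minimally} (there is no proper invariant subtree), or at least that $G^+$ has no proper invariant subtree; and second, that $G$ satisfies \emph{Tits' independence property} (often called property $(P)$), which controls how fixators of disjoint half-trees interact. Since the excerpt has just shown that $\Aut^+(T)$ \emph{equals} the subgroup generated by edge fixators, the theorem applies directly to $G=\Aut^+(T)$ with $G^+=G$, and the conclusion is exactly the simplicity of $\Aut^+(T)$.

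First I would set up the action: $\Aut^+(T)$ acts on the $q$-regular tree $T$, and by the definition of type-preservation it acts without inversions, since a type-preserving automorphism fixing an edge fixes both its endpoints. Next I would check minimality. Because $T$ is $q$-regular with $q\geq 2$, every vertex has degree $q+1\geq 3$, and the edge fixators already act transitively on edges (as established just above the statement); transitivity on edges forces any nonempty invariant subtree to contain every edge, hence to be all of $T$. So the action is minimal. Then I would verify the independence property $(P)$: for this one fixes an edge path or a half-tree decomposition and shows that the pointwise fixator of a path splits as a product of the fixators of the complementary half-trees hanging off the path. For the full automorphism group of a regular tree this is the standard, well-known model case where $(P)$ holds, essentially because an automorphism fixing a geodesic may independently permute the subtrees beyond each vertex of the geodesic.

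The main obstacle will be the verification of property $(P)$ \emph{for the type-preserving subgroup} rather than for all of $\Aut(T)$. One must confirm that the independent choices of local permutations beyond each vertex of a fixed geodesic can all be realized within $\Aut^+(T)$, not merely within $\Aut(T)$; that is, the local moves used to witness $(P)$ must themselves be type-preserving. This is true because the fixators of half-trees used in the decomposition fix the separating vertex and hence fix its type, so composing them preserves types globally, but this point is exactly where care is needed and where a naive appeal to the unrestricted automorphism group would be insufficient.

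With the action without inversions, minimality, and property $(P)$ all in hand, and with the identification $\Aut^+(T) = \langle \text{edge fixators}\rangle$ already proved, the Tits Simplicity Theorem yields that $\Aut^+(T)$ is simple, completing the proof. I would keep the exposition short, citing \cite{Tits} for the general statement and only spelling out the three verifications above, since the paper explicitly presents this as an instance of a known theorem.
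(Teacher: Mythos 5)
Your route is the same as the paper's: the paper gives no argument at all beyond the sentence that the statement is ``an instance of the Tits Simplicity Theorem,'' so spelling out the hypotheses of that theorem and checking them, as you do, is exactly what a full write-up requires, and your checks of no-inversions, of minimality via edge-transitivity, and of property $(P)$ holding for the type-preserving subgroup (not just for $\Aut(T)$) are all sound.

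There is, however, a genuine gap: you have mis-stated Tits' theorem. Besides property $(P)$ and the absence of a proper invariant subtree, the theorem requires that the group fix \emph{no end} of $T$, and with only the hypotheses you list the conclusion is false. Indeed, let $\Gamma$ be the full stabilizer of an end $\xi$ in $\Aut(T)$. This group acts transitively on the vertices and on the edges of $T$ (hence minimally), acts without inversions, and satisfies property $(P)$; yet the subgroup $\Gamma^+$ generated by its pointwise edge fixators consists exactly of the automorphisms fixing $\xi$ and preserving each horosphere centered at $\xi$, and this group is \emph{not} simple: each of its elements fixes pointwise a ray toward $\xi$ and therefore preserves every branch hanging off that ray, and the elements acting nontrivially on only finitely many of these branches form a proper nontrivial normal subgroup. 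So your observation that edge-transitivity forces minimality, while correct, cannot suffice, since edge-transitivity is compatible with fixing an end. The repair in the case at hand is easy: given any end $\xi$, pick a vertex $v$ on a ray toward $\xi$ and an edge $e$ at $v$ not lying on that ray (possible since $\deg v = q+1 \geq 3$); the pointwise fixator of $e$ lies in $\Aut^+(T)$ and contains an element exchanging the branch at $v$ containing $\xi$ with one of the other $q-1\geq 1$ branches at $v$ distinct from the one containing $e$, and this element moves $\xi$. Hence $\Aut^+(T)$ fixes no end, and with this additional verification your application of Tits' theorem becomes complete.
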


Now, consider the Higman-Thompson group $G_{q,2}$ acting on the
boundary of the planar forest $\mathcal{F}$. Pick an edge $e$ of $T$
and an embedding $i\colon \mathcal{F}\to T$ sending $\mathcal{F}$ onto
$T\setminus e$. It defines an embedding of $G_{q,2}$ into $N_q$ given
by $\Phi \mapsto i_*\circ \Phi \circ i_*^{-1}$, whose image we will
denote by $G_{q,2}^i$. If $j\colon \mathcal{F}\to T$ is another such
embedding, with image $T\setminus e'$, it can be written as $j = \eta
\circ i \circ \epsilon$ where $\eta\in\Aut^+(T)$, and $\epsilon$ is
either the identity map, or the unique automorphism of $\mathcal{F}$
preserving its structure of a planar forest, and exchanging its two
trees. As a consequence, in $N_q$ the subgroup $G_{q,2}^{j}$ is
conjugate to $G_{q,2}^i$ by an element of $\Aut^+(T)$.

\begin{lemma}
  \label{lem:generating-set}
  The Neretin group $N_q$ is generated by $\Aut^+(T)$ and $G_{q,2}^i$,
  for any embedding $i$.
\end{lemma}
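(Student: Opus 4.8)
The plan is to show that an arbitrary spheromorphism $\phi_*\in N_q$ can be written as a product of an element of $G_{q,2}^i$ and an element of $\Aut^+(T)$. The basic idea is that a spheromorphism is, by definition, an isomorphism of forests $\phi\colon T\setminus F_1 \to T\setminus F_2$ between two finite regular subtrees, and such an isomorphism is precisely the kind of combinatorial, tree-rearranging data that the Higman-Thompson group encodes; the only obstruction to capturing it entirely inside $G_{q,2}^i$ is that the trees of $T\setminus F_1$ and $T\setminus F_2$ carry no intrinsic planar structure, whereas the isomorphisms coming from $G_{q,2}^i$ are rigid once we fix the identification $i\colon\mathcal{F}\to T\setminus e$. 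The role of the $\Aut^+(T)$ factor will be to correct for the choice of local orderings (i.e.\ the colorings/planar structure) that $G_{q,2}^i$ cannot see.

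First I would fix the embedding $i\colon \mathcal{F}\to T$ with image $T\setminus e$, which transports the planar structure of $\mathcal{F}$ onto $T\setminus e$; via $i$, the group $G_{q,2}^i$ consists of exactly those spheromorphisms whose representative is an isomorphism of \emph{planar} forests obtained by cutting $T\setminus e$ along finite regular subtrees. Given $\phi_*\in N_q$ represented by $\phi\colon T\setminus F_1\to T\setminus F_2$, after enlarging $F_1,F_2$ I may assume both contain $e$, so that $T\setminus F_1$ and $T\setminus F_2$ refine $T\setminus e$ and thus inherit the transported planar structure. The isomorphism $\phi$ realizes some bijection between the trees of $T\setminus F_1$ and those of $T\setminus F_2$. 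The point is that $\phi$ need not respect the planar/type structure on the individual constituent trees, but there is a unique \emph{planar} isomorphism $\psi$ realizing the \emph{same} bijection of trees; then $\psi_*\in G_{q,2}^i$, and $g := \phi_*\circ\psi_*^{-1}$ is a spheromorphism represented by a forest isomorphism that sends each tree $T_w$ of $T\setminus F_2$ to itself (it is the composite $\phi\circ\psi^{-1}$, which fixes the tree-bijection and only readjusts each tree internally).

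Next I would argue that this correction factor $g$ actually lies in $\Aut^+(T)$. Since $g$ leaves each tree $T_w$ of the forest $T\setminus F_2$ set-wise in place, acting by a root-preserving automorphism of $T_w$, and since $g$ is the identity on the finite subtree $F_2$ up to the tree combinatorics, $g$ extends to an honest automorphism of $T$: this is exactly the situation described in the proof of Lemma~\ref{lem:neretin-topological-condition}, where forest isomorphisms leaving the trees in place extend to automorphisms of $T$. It remains to promote this to a \emph{type}-preserving automorphism. If the resulting automorphism is not type-preserving, I can absorb the discrepancy by composing with a type-preserving automorphism on one hand and adjusting $\psi$ by the planar tree-swap $\epsilon$ of $\mathcal{F}$ on the other, using the relation $G_{q,2}^j$ is $\Aut^+(T)$-conjugate to $G_{q,2}^i$ recorded just before the statement; this guarantees that the entire color/type defect can be routed into the $\Aut^+(T)$ factor or into a single element of $G_{q,2}^i$.

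The main obstacle I expect is bookkeeping the interaction between the two rigidities: the planar structure that makes $G_{q,2}^i$ elements unique, versus the free choice of internal automorphisms on each tree that $\phi$ is allowed to make. Concretely, I must verify that after fixing the tree-bijection by the unique planar $\psi$, the leftover map genuinely reduces to per-tree automorphisms consistently across the whole forest and extends continuously to all of $\bd T$ as a single tree automorphism, and that any parity/type obstruction is exactly of the kind that $\Aut^+(T)$ (as opposed to $\Aut(T)$) can still realize. Once the decomposition $\phi_* = g\circ\psi_*$ with $g\in\Aut^+(T)$ and $\psi_*\in G_{q,2}^i$ is established, the lemma follows, since $\Aut^+(T)$ and $G_{q,2}^i$ then generate all of $N_q$.
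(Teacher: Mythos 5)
Your decomposition is the same one the paper uses: take the unique planar isomorphism $\psi$ inducing the same bijection of trees as $\phi$, so that $\psi_*\in G_{q,2}^i$, and observe that the leftover map $\phi\circ\psi^{-1}$ preserves each tree of $T\setminus F_2$ and therefore extends to an automorphism of $T$ that is the identity on $F_2$. (The paper sets this up by choosing an auxiliary embedding $j\colon\mathcal{F}\to T\setminus e$ onto the complement of a common edge $e$ of $F_1$ and $F_2$, and then invokes the fact that $G_{q,2}^j$ is conjugate to $G_{q,2}^i$ by an element of $\Aut^+(T)$; your variant --- enlarging $F_1$ and $F_2$ until both contain the specific edge $e$ cut out by $i$, so that everything happens inside $G_{q,2}^i$ itself --- is an equally valid way to arrange the same thing, and even avoids the conjugacy step.)

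The gap is in your final step. Type-preservation of the correction factor $g$ is not a ``discrepancy'' that needs to be absorbed: it is automatic from facts you have already established, and the repair you sketch instead is not a proof. You note yourself that the extension of $\phi\circ\psi^{-1}$ to $T$ is the identity on $F_2$, and your $F_2$ contains the edge $e$; hence $g$ lies in the pointwise stabilizer of $e$. Such an automorphism fixes a vertex, so it cannot exchange the two color classes of the bipartition (an automorphism of a connected bipartite graph either preserves both classes or swaps both), and is therefore type-preserving; equivalently, pointwise edge stabilizers are contained in $\Aut^+(T)$, as recorded at the start of Section~\ref{sec:conv-gener-set}, where $\Aut^+(T)$ is shown to be generated by them. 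This single observation closes the proof: $\phi_*=g\circ\psi_*$ with $g\in\Aut^+(T)$ and $\psi_*\in G_{q,2}^i$. By contrast, the fallback you propose --- composing with an unspecified type-preserving automorphism and adjusting $\psi$ by the planar swap $\epsilon$ so that the ``color/type defect can be routed into'' one of the two factors --- names no concrete element and gives no reason why the adjusted product still decomposes into the two subgroups; as written it assumes exactly what is to be shown. Replace that paragraph by the one-line observation above and your argument becomes complete, and essentially identical to the paper's.
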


\begin{proof}
  Let $\phi_*\in N_q$ be represented by $\phi\colon T\setminus F_1\to
  T\setminus F_2$. We may suppose that the subtrees $F_1$ and $F_2$
  share a common edge $e$. Choose an isomorphism $j\colon \mathcal{F}
  \to T\setminus e$; it defines the subgroup $G_{q,2}^j \subseteq
  \langle \Aut^+(T), G_{q,2}^i\rangle$. 

  There exists an element $\psi_*\in G_{q,2}^j$ with representative
  $\psi\colon T\setminus F_1\to T\setminus F_2$ inducing the same
  bijection of trees as $\phi$. Then $\psi^{-1}\phi\colon T\setminus
  F_1 \to T\setminus F_1$ preserves the trees of the forest
  $T\setminus F_1$, and therefore extends to an automorphism of $T$
  fixing pointwise the subtree $F_1$, and in particular the edge
  $e\subseteq F_1$. Thus, $\phi_*$ is a product of $\psi_*\in
  G^{j}_{q,2}$ and a type-preserving automorphism.
\end{proof}

Since the embedded copies of $G_{q,2}$ are conjugate by elements of
$\Aut^+(T)$, we may slightly abuse the notation and write $N_q =
\langle \Aut^+(T), G_{q,2}\rangle$.

\section{The simplicity of the Neretin groups}
\label{sec:simplicity}

In this section we present the proof of the simplicity of the Neretin
groups following \cite{Kapoudjian} using a method that was introduced
by Epstein in \cite{Epstein}. We remark that by using the group
topology one can also provide potentially simpler alternative proofs
which do not use Epstein's method. Nevertheless, the following lemmas
apply to a large variety of examples and are thus worth recalling.

We begin by two general lemmas about actions on topological
spaces. The setting we will consider will consist of a compact
Hausdorff topological space $X$, and a faithful group action by
homeomorphisms of a group $G$ on $X$.

\begin{lemma}
\label{restriction in normal subgroup}
Let $X$ and $G$ be as above, let $\mathcal{U}$ be a basis of $X$ on
which $G$ acts transitively, and let $1\ne H\triangleleft G$ be a
non-trivial normal subgroup of $G$. Then, for all $g\in G$ such that
$\supp(g)\subseteq V\in \mathcal{U}$ there exists an element $\rho\in
H$ such that $\rho |_V = g |_V$.
\end{lemma}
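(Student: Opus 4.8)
The plan is to use a commutator trick, exploiting the normality of $H$ together with a disjointness that transitivity of $G$ on $\mathcal{U}$ will let us manufacture. The core observation is this: if we can find an element $h\in H$ with $h(V)\cap V=\emptyset$, then the commutator
\[
  \rho = [g,h] = g h g^{-1} h^{-1}
\]
already does the job. It lies in $H$ because $H$ is normal, so $ghg^{-1}\in H$, and $h^{-1}\in H$. To check that $\rho|_V=g|_V$, fix $x\in V$. From $h(V)\cap V=\emptyset$ we get $h^{-1}(V)\cap V=\emptyset$, so $h^{-1}(x)\notin V$; since $\supp(g)\subseteq V$, both $g$ and $g^{-1}$ fix every point outside $V$, whence $g^{-1}(h^{-1}(x))=h^{-1}(x)$, then $hg^{-1}h^{-1}(x)=x$, and finally $\rho(x)=g(x)$.

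Thus everything reduces to producing an $h\in H$ that pushes $V$ off itself. First I would pick any nontrivial $h_0\in H$; by faithfulness of the action it moves some point, i.e.\ $h_0(x_0)\neq x_0$ for some $x_0\in X$. Since $X$ is Hausdorff, separate $x_0$ and $h_0(x_0)$ by disjoint open sets, and intersect the neighbourhood of $x_0$ with the $h_0$-preimage of the neighbourhood of $h_0(x_0)$, using continuity of $h_0$; this yields an open $U_0\ni x_0$ with $h_0(U_0)\cap U_0=\emptyset$. Because $\mathcal{U}$ is a basis, I can shrink $U_0$ to some $U\in\mathcal{U}$ with $x_0\in U\subseteq U_0$, still satisfying $h_0(U)\cap U=\emptyset$.

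Finally I would transport this disjointness from $U$ to the prescribed $V$ by transitivity. Since $G$ acts transitively on $\mathcal{U}$, choose $f\in G$ with $f(U)=V$ and set $h=f h_0 f^{-1}$, which lies in $H$ by normality. Then $h(V)=f h_0 f^{-1}(V)=f(h_0(U))$ while $V=f(U)$, so, as $f$ is a bijection,
\[
  h(V)\cap V = f\bigl(h_0(U)\cap U\bigr) = \emptyset.
\]
Feeding this $h$ into the commutator above produces the required $\rho\in H$ with $\rho|_V=g|_V$.

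The commutator computation and the transitivity transport are routine. I expect the only genuinely delicate point to be the passage from ``$h_0$ moves a point'' to ``$h_0$ moves a whole basic open set off itself,'' which is precisely where Hausdorffness and the continuity of the action are used (compactness plays no role here). Once that disjoint basic neighbourhood is secured, normality and transitivity finish the argument mechanically.
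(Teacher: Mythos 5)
Your proof is correct and takes essentially the same approach as the paper: the commutator trick $\rho=[g,h]$ with an element $h\in H$ displacing $V$ off itself, obtained by Hausdorff separation plus transitivity and normality. The only cosmetic difference is that you conjugate the displacing element of $H$ onto $V$, while the paper conjugates $g$ onto the fixed basis set $V_0$; the two computations coincide since $h^{-1}[hgh^{-1},\alpha]h=[g,h^{-1}\alpha h]$.
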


\begin{proof}
  Let $1\ne\alpha\in H$ be any non-trivial element of $H$. Let $x\in
  X$ be a point for which $\alpha^{-1}(x)\ne x$.  One can find a basis
  set $V_0\in\mathcal U$ such that $\alpha^{-1}(V_0)\cap
  V_0=\emptyset$.

  Assume first that $V=V_0$, and consider $\rho=[g,\alpha]=g\alpha
  g^{-1} \alpha^{-1}$. Since $\alpha\in H$ and $H$ is normal in $G$ we
  get that $\rho\in H$.  Moreover, by our assumption $\supp(g)
  \subseteq V$, thus when restricted to $V$ we see that $\rho |_V = g
  |_V$, as required.

  More generally, if $V\ne V_0$, we may find $h\in G$ such that
  $hV=V_0$. Now the element $g'=hgh^{-1}$ satisfies
  $\supp(hgh^{-1})\subseteq V_0$ and thus by the above we can find
  $\rho'\in H$ that satisfies $\rho' |_{V_0} = g' |_{V_0}$.  Since $H$
  is normal $\rho=h^{-1}\rho h\in H$, and $\rho$ satisfies $\rho |_{V}
  = g |_{V}$
\end{proof}

\begin{lemma} 
\label{commutator in normal subgroup}
Let $X$ and $G$ be as above, let $\mathcal{U}$ be a basis of $X$ on
which $G$ acts transitively, and let $1\ne H\triangleleft G$ be a
non-trivial normal subgroup of $G$ in which there exists
$\alpha_1,\alpha_2\in H$ such that for some $x\in X$ the points
$x,\alpha_1(x),\alpha_2(x)$ are distinct. Then, for all $g_1,g_2\in G$
such that $\supp(g_1),\supp(g_2)\subseteq V\in \mathcal{U}$ there
exist element $\rho_1,\rho_2\in H$ such that
$[g_1,g_2]=[\rho_1,\rho_2]$.
\end{lemma}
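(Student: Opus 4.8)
The plan is to combine the two hypotheses: the normal subgroup $H$ contains elements $\alpha_1,\alpha_2$ witnessing three distinct points $x,\alpha_1(x),\alpha_2(x)$, and the previous Lemma~\ref{restriction in normal subgroup} lets us realize any locally supported element of $G$ on a basic set by an element of $H$. The goal is to write the commutator $[g_1,g_2]$ of two elements supported in $V$ as a commutator $[\rho_1,\rho_2]$ of two elements of $H$. First I would use the three distinct points to separate space: find a basic set $V_0\in\mathcal{U}$ containing $x$ whose translates $V_0$, $\alpha_1(V_0)$, $\alpha_2(V_0)$ are pairwise disjoint (possible by shrinking $V_0$ using Hausdorffness and the fact that $\mathcal{U}$ is a basis). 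As in the previous lemma, transitivity of $G$ on $\mathcal{U}$ lets me reduce to the case $V=V_0$, conjugating the whole equation $[g_1,g_2]=[\rho_1,\rho_2]$ by a suitable $h\in G$ with $hV=V_0$; since $H$ is normal this conjugation keeps the $\rho_i$ inside $H$. So the heart of the matter is the case where $g_1,g_2$ are supported in $V_0$.

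The key idea is to spread copies of $g_1$ and $g_2$ into the disjoint translates. By Lemma~\ref{restriction in normal subgroup}, choose $\rho_1\in H$ with $\rho_1|_{V_0}=g_1|_{V_0}$ and $\rho_2\in H$ with $\rho_2|_{V_0}=g_2|_{V_0}$. The difficulty is that $\rho_1,\rho_2$ may act wildly outside $V_0$, so $[\rho_1,\rho_2]$ need not equal $[g_1,g_2]$ globally. The trick I would use is to build $\rho_1,\rho_2$ more cleverly so that their actions outside $V_0$ are arranged in the disjoint regions $\alpha_1(V_0)$ and $\alpha_2(V_0)$ and are chosen to cancel in the commutator. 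Concretely, I would aim to set
\begin{equation}
  \rho_1 = g_1 \cdot (\alpha_1 g_1^{-1} \alpha_1^{-1}), \qquad
  \rho_2 = g_2 \cdot (\alpha_2 g_2^{-1} \alpha_2^{-1}),
\end{equation}
where the first factor is supported in $V_0$ and the second in the disjoint set $\alpha_i(V_0)$. Each $\rho_i$ lies in $H$: it is a product of $g_i$ — which by Lemma~\ref{restriction in normal subgroup} agrees on $V_0$ with an element of $H$ supported there — and a conjugate of $g_i^{-1}$, but I must be careful, since $g_i$ itself need not lie in $H$. The correct formulation is to take the $H$-element $\eta_i\in H$ from Lemma~\ref{restriction in normal subgroup} agreeing with $g_i$ on $V_0$ and supported within a basic set containing $V_0$ but disjoint from the $\alpha_j(V_0)$, and then set $\rho_i=\eta_i\cdot(\alpha_i\eta_i^{-1}\alpha_i^{-1})$, which is manifestly in $H$ because both factors are.

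With $\rho_1,\rho_2$ supported in the disjoint sets $V_0\sqcup\alpha_1(V_0)$ and $V_0\sqcup\alpha_2(V_0)$ respectively, I would compute $[\rho_1,\rho_2]$ by tracking its action on each of the three disjoint regions separately. On $\alpha_1(V_0)$ only $\rho_1$ is active, on $\alpha_2(V_0)$ only $\rho_2$ is active, so the commutator acts trivially on both of those translates (each is an actual region where one element is supported and the other is the identity, so the commutator collapses). On $V_0$ both $\rho_1$ and $\rho_2$ restrict to $g_1,g_2$, so there the commutator restricts to $[g_1,g_2]|_{V_0}$. Outside all three regions everything is the identity. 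Since $[g_1,g_2]$ is itself supported in $V_0$, this shows $[\rho_1,\rho_2]=[g_1,g_2]$ everywhere, as desired. The main obstacle I anticipate is the bookkeeping of supports: one has to verify that the supports of the two factors of each $\rho_i$ are genuinely disjoint and that the commutator really does vanish on the outer translates rather than merely on the original supports; this rests on the disjointness of $V_0,\alpha_1(V_0),\alpha_2(V_0)$, which is exactly why the three-distinct-points hypothesis on $H$ is needed.
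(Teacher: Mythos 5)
Your opening moves match the paper exactly: the reduction to $V=V_0$ by conjugation (using normality of $H$ to keep the $\rho_i$ in $H$), and your first proposed construction $\rho_i = g_i\,(\alpha_i g_i^{-1}\alpha_i^{-1}) = [g_i,\alpha_i]$, together with its support bookkeeping and the three-region commutator computation, are precisely the paper's proof. The gap is in what you do next. You abandon this construction because you worry that $\rho_i\in H$ is unjustified (correctly noting that $g_i$ need not lie in $H$), but this worry has a one-line resolution which you missed: regroup the commutator as $\rho_i=(g_i\alpha_i g_i^{-1})\,\alpha_i^{-1}$. The first factor is a conjugate of $\alpha_i\in H$ by an element of $G$, hence lies in $H$ by normality, and the second factor lies in $H$; so $\rho_i\in H$. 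This is the same argument already used in the proof of Lemma~\ref{restriction in normal subgroup}, and it is all that is needed.

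The ``corrected'' construction you substitute, $\rho_i=\eta_i\,(\alpha_i\eta_i^{-1}\alpha_i^{-1})$ with $\eta_i\in H$ supplied by Lemma~\ref{restriction in normal subgroup}, does not go through. That lemma asserts only the existence of $\eta_i\in H$ with $\eta_i|_{V_0}=g_i|_{V_0}$; it gives no control whatsoever over $\supp(\eta_i)$, so you are not entitled to an $\eta_i$ ``supported within a basic set containing $V_0$ but disjoint from the $\alpha_j(V_0)$.'' Moreover, even granting some support control, the disjointness conditions you impose are not the relevant ones: to get $\rho_i|_{V_0}=g_i|_{V_0}$ you need $\alpha_i(\supp\eta_i)\cap V_0=\emptyset$, i.e.\ $\supp\eta_i$ disjoint from $\alpha_i^{-1}(V_0)$, and for the commutator to collapse outside $V_0$ you need the sets $\alpha_1(\supp\eta_1)$ and $\alpha_2(\supp\eta_2)$ to be disjoint from each other and from $V_0$; none of this follows from disjointness of $\supp\eta_i$ from $\alpha_1(V_0)$ and $\alpha_2(V_0)$, nor can it be extracted from the hypotheses, which provide only the two elements $\alpha_1,\alpha_2$. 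The repair is simply to revert to your original choice $\rho_i=[g_i,\alpha_i]$ and justify membership in $H$ by normality as above, which recovers the paper's proof verbatim.
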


\begin{proof}
  Let $x\in X$ and $\alpha_1,\alpha_2\in H$ be as assumed.  One can
  find a basis set $V_0\in\mathcal U$ such that
  $V_0,\alpha_1(V_0),\alpha_2(V_0)$ are pairwise disjoint.

  As in the proof of the previous lemma we may assume up to
  conjugation that $V=V_0$, and consider $\rho_1=[g,\alpha_1]$ and
  $\rho_2=[g,\alpha_2]$. Again we have $\rho_1,\rho_2\in H$ as
  required.  Moreover, by our assumption $\supp(g_1),\supp(g_2)
  \subseteq V$, thus when restricted to $V$ we see that
  $[\rho_1,\rho_2] |_V = [g_1,g_2] |_V$. Moreover, $\rho_i$ ($i=1,2$)
  preserves the pairwise disjoint sets
  $V_0,\alpha_1(V_0),\alpha_2(V_0)$ and is supported on $V_0 \cup
  \alpha_i(V_0)$. It follows that $[\rho_1,\rho_2] |_{X\setminus V} =
  \rm{id} = [g_1,g_2] |_{X\setminus V}$. Thus overall we get
  $[g_1,g_2]=[\rho_1,\rho_2]$.
\end{proof}

\begin{remark}
\label{remark about lemma about commutators}
Note that in order to find such $\alpha_1,\alpha_2$ as in Lemma
\ref{commutator in normal subgroup}, by Lemma \ref{restriction in
  normal subgroup} it is enough to find two such elements in $G$ that
are supported on a basis set.
\end{remark}

We now apply the previous lemmas to prove the simplicity of the
Neretin group.

\begin{theorem}
  The Neretin group $N_q$ is simple.
\end{theorem}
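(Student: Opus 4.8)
The plan is to apply the two preceding lemmas to the faithful action of $G = N_q$ on the compact Hausdorff space $X = \bd T$, taking $\mathcal{U}$ to be the basis of balls $\bd T_v$. This action is transitive on $\mathcal{U}$, since any two balls $\bd T_u$ and $\bd T_v$ are interchanged by a spheromorphism built from a root-preserving isomorphism $T_u\to T_v$ extended across suitable finite subtrees. Fixing a nontrivial normal subgroup $1\ne H\triangleleft N_q$, I would first verify the hypothesis of Lemma \ref{commutator in normal subgroup}: by Remark \ref{remark about lemma about commutators} it suffices to produce two elements of $N_q$ supported on a single ball that move a common point to three distinct positions, which is immediate for $q\ge 2$ by permuting sub-balls. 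Consequently, for every ball $V$ and all $g_1,g_2\in N_q$ with $\supp(g_1),\supp(g_2)\subseteq V$, the commutator $[g_1,g_2]$ lies in $H$.

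Next I would secure a foothold in each of the two generating subgroups supplied by Lemma \ref{lem:generating-set}. Every automorphism of a rooted subtree $T_v$, extended by the identity, preserves depth and hence the $2$-coloring, so it lies in $\Aut^+(T)$ and is supported on $\bd T_v$; since $\Aut(T_v)$ is non-abelian for $q\ge 2$, I may choose non-commuting $g_1,g_2\in\Aut^+(T)$ supported on a common ball $V$. Their commutator is then a nontrivial element of $\Aut^+(T)$ lying in $H$, so $H\cap\Aut^+(T)$ is a nontrivial normal subgroup of $\Aut^+(T)$, and Theorem \ref{Tits simplicity} forces $\Aut^+(T)\subseteq H$. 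Likewise, choosing non-commuting $g_1,g_2\in G_{q,2}$ supported on a ball, the commutator $[g_1,g_2]$ is a nontrivial element of $\ker\theta = G'_{q,2}$ lying in $H$; by the simplicity of $G'_{q,2}$ in Theorem \ref{thm:commutator-higman-thompson} we obtain $G'_{q,2}\subseteq H$. Hence $\langle\Aut^+(T),G'_{q,2}\rangle\subseteq H$.

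If $q$ is even then $\theta$ is trivial, so $G_{q,2}=G'_{q,2}$ and $N_q=\langle\Aut^+(T),G_{q,2}\rangle\subseteq H$, giving $H=N_q$. The remaining case $q$ odd is the one I expect to be the crux: here $G_{q,2}/G'_{q,2}\cong\ZZ/2\ZZ$, and it must be shown that $H$ also captures the odd part. The key observation is that a transposition $\tau\in G_{q,2}$ interchanging two sibling balls $\bd T_{v_1},\bd T_{v_2}$ via the planar isomorphism coincides, as an element of $N_q$, with the automorphism of $T$ that swaps the subtrees $T_{v_1},T_{v_2}$ and fixes everything else pointwise. This automorphism preserves the $2$-coloring (both siblings and their descendants keep their depths), hence is type-preserving and lies in $\Aut^+(T)\subseteq H$. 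Since $\theta(\tau)=1$, this exhibits an odd element of $G_{q,2}$ already inside $H$, so $G_{q,2}=\langle G'_{q,2},\tau\rangle\subseteq H$. Together with $\Aut^+(T)\subseteq H$ this yields $N_q\subseteq H$, completing the proof that $N_q$ is simple.
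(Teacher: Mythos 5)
Your proposal is correct and takes essentially the same route as the paper: Epstein's two lemmas give footholds of $H$ in the two simple generating subgroups $\Aut^+(T)$ and $G'_{q,2}$, and your sibling-subtree transposition is precisely the justification of the paper's assertion that $\Aut^+(T)\cap(G_{q,2}\setminus G'_{q,2})\ne\emptyset$, which settles the odd-$q$ case. You simply spell out details the paper leaves implicit (transitivity on balls, the explicit non-commuting supported elements, and the even/odd case split).
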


\begin{proof}
  Let $1\ne H\triangleleft N_q$ be a nontrivial subgroup of the
  Neretin group. From Lemma \ref{lem:generating-set} the Neretin group
  is generated by $\Aut^+(T_q)$ and $G_{q,2}$. In fact, it is enough
  to take the commutator subgroup $G_{q,2}'$ of $G_{q,2}$, since
  $\Aut^+(T_q)\cap(G_{q,2}\setminus G_{q,2}')\ne\emptyset$
  whenever $G_{q,2}\ne G_{q,2}'$. From Theorems \ref{Tits
    simplicity} and \ref{thm:commutator-higman-thompson} the subgroups
  $\Aut^+(T_q)$ and $G_{q,2}'$ are simple. Thus, in order to prove
  the claim it is enough to show that $H\cap \Aut^+(T_q)\ne 1$ and
  $H\cap G_{q,2}'\ne 1$.

  We observe that the Neretin group acts faithfully by homeomorphisms
  on the boundary of the tree $T_q$, and acts transitively on the
  basis of ends of half trees. We complete the proof by Lemma
  \ref{commutator in normal subgroup} and \ref{remark about lemma
    about commutators} after finding two pairs of non-commuting
  elements in $\Aut^+(T_q)$ and $G_{q,2}$ that are supported in a
  half-tree.
\end{proof}

\bibliographystyle{plain}
\bibliography{neretin}
\end{document}